\newtheorem{thm}{Theorem}
\newtheorem{lem}{Lemma}
\newtheorem{conj}{Conjecture}
\theoremstyle{definition}
\newcommand{\G}{{\mathcal G}}
\newcommand{\F}{{\mathcal F}}
\newcommand{\A}{{\mathcal A}}
\newcommand{\R}{{\mathcal R}}
\newcommand{\es}{{\mathcal S}}
\newcommand{\D}{{\mathbb D}}
\newcommand{\real}{{\operatorname{Re}\,}}
\def\be{\begin{equation}}
\def\ee{\end{equation}}
\begin{document}
\title[Third Hankel determinant for Ozaki close-to-convex]{Improved upper bound of third order Hankel determinant for Ozaki close-to-convex functions}

\author[M. Obradovi\'{c}]{Milutin Obradovi\'{c}}
\address{Department of Mathematics,
Faculty of Civil Engineering, University of Belgrade,
Bulevar Kralja Aleksandra 73, 11000, Belgrade, Serbia}
\email{obrad@grf.bg.ac.rs}

\author[N. Tuneski]{Nikola Tuneski}
\address{Department of Mathematics and Informatics, Faculty of Mechanical Engineering, Ss. Cyril and Methodius
University in Skopje, Karpo\v{s} II b.b., 1000 Skopje, Republic of North Macedonia.}
\email{nikola.tuneski@mf.edu.mk}

\subjclass[2000]{30C45, 30C50}
\keywords{analytic, univalent, Hankel determinant, upper bound, Ozaki close-to-convex.}

\begin{abstract}
In this paper we  improve the upper bound of the third order Hankel determinant for the class of Ozaki close-to-convex functions. The sharp bound is conjectured.
\end{abstract}

%\thanks{The work of the first author was supported by MNZZS Grant, No. ON174017, Serbia. The research of the second
%author was supported by National Board for Higher
%Mathematics, India.}

\maketitle

\section{Introduction and preliminaries}

\medskip

Univalent functions are functions which are analytic, one-on-one and onto on a certain domain. Their study for more than a century shows that problems are significantly more difficult to be solved over the general class  instead of its subclasses.  This is also the case for the upper bound of the Hankel determinant, a problem rediscovered and extensively studied in recent years. Over the class $\A$ of functions $f(z)=z+a_2z^2+a_3z^3+\cdots$ analytic on the unit disk, this determinant is defined by
\[
        H_{q}(n) = \left |
        \begin{array}{cccc}
        a_{n} & a_{n+1}& \ldots& a_{n+q-1}\\
        a_{n+1}&a_{n+2}& \ldots& a_{n+q}\\
        \vdots&\vdots&~&\vdots \\
        a_{n+q-1}& a_{n+q}&\ldots&a_{n+2q-2}\\
        \end{array}
        \right |,
\]
where $q\geq 1$ and $n\geq 1$. The second order Hankel determinants is
\[ H_2(2) =  \left |
        \begin{array}{cc}
        a_2 & a_3\\
        a_3 & a_4\\
        \end{array}
        \right | = a_2a_4-a_{3}^2,
\]
and the third order one is
\[ H_3(1) =  \left |
        \begin{array}{ccc}
        1 & a_2& a_3\\
        a_2 & a_3& a_4\\
        a_3 & a_4& a_5\\
        \end{array}
        \right | = a_3(a_2a_4-a_{3}^2)-a_4(a_4-a_2a_3)+a_5(a_3-a_2^2).
\]

\medskip

For the general class $\es$ of univalent functions in the class $\A$ tehre are very few results concerning the Hankel determinant.  The best known for the second order case  is due to Hayman (\cite{hayman-68}), saying  that $|H_2(n)|\le An^{1/2}$, where $A$ is an absolute constant, and that this rate of growth is the best possible. Another one is \cite {OT-S}, where it was proven that $|H_{2}(2)|\leq  A$, where $1\leq A\leq \frac{11}{3}=3,66\ldots$ and $|H_{3}(1)|\leq  B$, where $\frac49\leq B\leq \frac{32+\sqrt{285}}{15} = 3.258796\cdots$.

\medskip

There are much more results for the subclasses of $\es$. Namely, for starlike functions the upper bounds for the second and the third order Hankel determinant are 1 (\cite{janteng-07}) and $= 0.777987\ldots$ (\cite{MONT-2019-3}), respectively, while for the same bounds for the convex functions they are $1/8$ (\cite{janteng-07}) and $\frac{4}{135}=0.0296\ldots$ (\cite{Kowalczyk-18}). The estimates for the second order case  are sharp, while of the third order are not, but are best known.
For the class $\R\subset\A$ of functions with bounded turning satisfying $\real f'(z)>0$, $z\in\D$, we have
sharp estimate $|H_2(1)|\le \frac{4}{9}  = 0.444\ldots,$ (\cite{janteng-06}) and probably non-sharp  $ |H_3(1)| \le \frac{207}{540} = 0.38333\ldots$ (\cite{OT-R}).

\medskip

In this paper we study two classes introduced by Ozaki.

\medskip

The first one is the class of Ozaki close-to-convex functions
\[ \F = \{f\in\A: \real \left[1+\frac{zf''(z)}{f'(z)}\right]>-\frac12,\, z\in\D\}\]
introduced by Ozaki in 1941 (\cite{ozaki-1941}) and it is a subclass of the class of close-to-convex functions. For this class the non-sharp estiamtes are known $ |H_{2}(2)|\leq \frac{21}{64}$ (\cite{MONT-2018-1}) and $ |H_{3}(1)|\leq \frac{180+69\sqrt{15}}{32\sqrt{15}}=3.6086187\ldots$ (\cite{ind-1}). We will significantly improve the second estimate to the value $0.1375\ldots$. More about this class one can find in \cite[Sect. 9.5]{DTV-book}.

\medskip

The other class that we will be considered is
\[ \G = \{f\in\A: \real \left[1+\frac{zf''(z)}{f'(z)}\right]<\frac32,\, z\in\D\},\]
Ozaki in \cite{ozaki-1941} introduced this class and proved that it is subclass of $\es$.
Later, Sakaguchi in \cite{saka} and R. Singh and S. Singh in \cite{singh} showed, respectively,  that functions in $\mathcal{G}$ are close-to-convex and starlike.
Again in \cite{MONT-2018-1} it was shown that $ |H_{2}(2)|\leq \frac{9}{320}=0.028125\ldots$. Here we will give estimate of the third Hankel determinant.

\medskip

In the studies given in this paper we use approach based on the estimates of the coefficients of  Shwartz function due to Prokhorov and Szynal (Lemma \ref{lem-prok} given below). This approach is essentially different than the  commonly used and  is the main reason for the improvement in the estimate for the class $\F$ mentioned above. Usually the research is done using a result on coefficients of Carath\'{e}odory functions (functions from with positive real part on the unit disk)  that involves Toeplitz determinants (see \cite[Theorem 3.1.4, p.26]{DTV-book} and \cite{granader}).

\medskip

Here is the result of Prokhorov and Szynal that we will need. In more general form it can be found in \cite[Lemma 2]{Prokhorov-1984}.

\begin{lem}\label{lem-prok}
Let $\omega(z)=c_{1}z+c_{2}z^{2}+\cdots $ be a Schwarz function, i.e., be analytic in the unit dick and $|\omega(z)|<1$ when $z\in\D$ and  $\mu$ and $\nu$ be real numbers.
If $\frac12\le|\mu|\le2$ and $\frac{4}{27}(|\mu|+1)^3-(|\mu|+1)\le\nu\le1$, then
$\left|c_{3}+\mu c_{1}c_{2}+\nu c_{1}^{3}\right|\leq 1.$
\end{lem}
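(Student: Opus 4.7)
My plan is to reduce the inequality to a finite-dimensional optimization via the Schur algorithm for bounded analytic functions.

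First, a rotation normalization. For unimodular $\zeta$, the function $\omega(\zeta z)$ is again Schwarz with $k$-th coefficient $\zeta^{k} c_k$, so the expression $c_3 + \mu c_1 c_2 + \nu c_1^3$ picks up the factor $\zeta^{3}$ and its modulus is unchanged. Choosing $\zeta = e^{-i \arg c_1}$, we may therefore assume $c_1 = t \in [0,1]$.

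Second, the classical Schur reduction. Writing $\omega(z) = z\phi(z)$, the function $\phi$ is analytic on $\D$ with $|\phi|\le 1$ and $\phi(0)=t$. Set
\[
\phi_1(z) = \frac{\phi(z) - t}{z\bigl(1 - t \phi(z)\bigr)}.
\]
The disc automorphism $w \mapsto (w-t)/(1-tw)$ sends $\phi(\D)$ into $\overline{\D}$ and vanishes at $\phi(0) = t$, so the ratio is divisible by $z$ and the Schwarz lemma gives $|\phi_1(z)| \le 1$ on $\D$. Expanding $z\bigl(1 - t\phi(z)\bigr)\phi_1(z) = \phi(z) - t$ and comparing coefficients yields
\[
c_2 = (1-t^2)\gamma_1, \qquad c_3 = (1-t^2)\bigl(\gamma_2 - t\gamma_1^{2}\bigr),
\]
where $\gamma_1 = \phi_1(0)$ and $\gamma_2 = \phi_1'(0)$. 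Applying Schwarz--Pick to $\phi_1$ at the origin gives the constraints $|\gamma_1| \le 1$ and $|\gamma_2| \le 1 - |\gamma_1|^2$.

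Substituting,
\[
c_3 + \mu c_1 c_2 + \nu c_1^3 = (1-t^2)\gamma_2 + (1-t^2)t\,\gamma_1(\mu - \gamma_1) + \nu t^3.
\]
Writing $\gamma_1 = s e^{i\theta}$ and choosing the argument of $\gamma_2$ optimally (subject only to $|\gamma_2|\le 1-s^2$), it suffices to prove
\[
(1-t^2)(1-s^2) + \Bigl|(1-t^2)ts\bigl(\mu e^{i\theta} - s e^{2i\theta}\bigr) + \nu t^3\Bigr| \le 1
\]
for all $t, s \in [0,1]$ and $\theta \in \IR$ whenever $\tfrac12\le|\mu|\le 2$ and $\tfrac{4}{27}(|\mu|+1)^3 - (|\mu|+1) \le \nu \le 1$.

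The main obstacle is this concrete three-parameter optimization. The natural strategy is to first maximize in $\theta$ by squaring the modulus, which becomes a quadratic in $\cos\theta$ whose maximizer is explicit; then optimize in $s \in [0,1]$, reducing to a comparison of the roots of a cubic in $s$ with the endpoints $s = 0, 1$; and finally verify that the resulting univariate bound in $t$ stays at most $1$ on $[0,1]$. The appearance of the cubic expression $\tfrac{4}{27}(|\mu|+1)^{3} - (|\mu|+1)$ in the hypothesis strongly suggests that the extremal configuration corresponds to an interior tangency of the reduced bound to the value $1$, a signature of sharp Schwarz-coefficient inequalities that will require a careful splitting of the $(\mu,\nu)$-region into subcases to identify which critical point is actually active.
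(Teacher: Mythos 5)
First, note that the paper itself does not prove this lemma: it is quoted verbatim from Prokhorov and Szynal (\cite{Prokhorov-1984}, Lemma~2), so there is no internal proof to compare against; what matters is whether your argument is complete on its own. Your reduction is correct as far as it goes: the rotation normalization $c_1=t\in[0,1]$ is legitimate, the Schur step $\omega(z)=z\phi(z)$, $\phi_1(z)=\bigl(\phi(z)-t\bigr)/\bigl(z(1-t\phi(z))\bigr)$ gives $c_2=(1-t^2)\gamma_1$ and $c_3=(1-t^2)(\gamma_2-t\gamma_1^2)$ with the Schwarz--Pick constraints $|\gamma_1|\le1$, $|\gamma_2|\le1-|\gamma_1|^2$, and the resulting target inequality
\[
(1-t^2)(1-s^2)+\bigl|(1-t^2)ts\bigl(\mu e^{i\theta}-se^{2i\theta}\bigr)+\nu t^3\bigr|\le1
\]
is the right finite-dimensional reformulation (this is essentially the same coefficient-body parametrization underlying the original proof).

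However, there is a genuine gap: you stop exactly where the content of the lemma begins. The entire point of the statement is that the maximum of this three-parameter expression is at most $1$ precisely under the hypotheses $\tfrac12\le|\mu|\le2$ and $\tfrac{4}{27}(|\mu|+1)^3-(|\mu|+1)\le\nu\le1$; establishing that requires actually carrying out the optimization in $\theta$, $s$, $t$ and the case analysis over the $(\mu,\nu)$-region, which in Prokhorov--Szynal's paper is a substantial piece of work (they determine the sharp maximum as a piecewise-defined function of $(\mu,\nu)$ over several regions, of which the stated hypotheses describe the region where the maximum equals $1$). Your proposal only names this as ``the natural strategy'' and observes that the cubic $\tfrac{4}{27}(|\mu|+1)^3-(|\mu|+1)$ should emerge from an interior tangency; no part of that computation is performed, no subcases are identified, and no verification is given that the candidate critical values do not exceed $1$ on the stated parameter range. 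As it stands the argument proves nothing beyond the (standard) reduction, so the lemma is not established; to complete it you would either have to execute the maximization in full or simply cite \cite{Prokhorov-1984}, as the paper does.
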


We will also need the following,  almost forgotten result of Carleson (\cite{carlson}).

\begin{lem}\label{lem-carl}
Let $\omega(z)=c_{1}z+c_{2}z^{2}+\cdots $ be a Schwarz function. Then
\[|c_2|\le1-|c_1|^2 \quad\mbox{and}\quad |c_4|\le1-|c_1|^2 -|c_2|^2. \]
\end{lem}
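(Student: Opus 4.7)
My plan is to prove the two inequalities by applying the Schwarz--Pick lemma to holomorphic self-maps of $\D$ constructed from $\omega$. For the first, set $W(z)=\omega(z)/z$; since $\omega(0)=0$, $W$ extends to an analytic function on $\D$, and Schwarz's lemma gives $|W|\le 1$, so $W$ is a self-map of $\D$ with $W(0)=c_1$ and $W'(0)=c_2$. The Schwarz--Pick estimate at the origin, $|W'(0)|\le 1-|W(0)|^{2}$, reads exactly $|c_2|\le 1-|c_1|^{2}$.

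For the second inequality I would iterate this construction. Form the Schur transform
\[T(z)=\frac{W(z)-c_1}{1-\overline{c_1}\,W(z)},\]
a self-map of $\D$ with $T(0)=0$, so $U(z):=T(z)/z$ is itself a self-map by Schwarz. Writing $b=\overline{c_1}/(1-|c_1|^{2})$, a direct expansion gives
\[U(z)=\frac{c_2}{1-|c_1|^{2}}+\frac{c_3+bc_2^{2}}{1-|c_1|^{2}}\,z+\frac{c_4+2bc_2c_3+b^{2}c_2^{3}}{1-|c_1|^{2}}\,z^{2}+\cdots,\]
with $U(0)=\mu:=c_2/(1-|c_1|^{2})$, $|\mu|\le 1$ (which also recovers the first inequality). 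Applying the Schur reduction once more to $U$ about $\mu$ and invoking Schwarz--Pick at the origin for the resulting self-map yields a refined bound of the form
\[\left|\frac{c_4+2bc_2c_3+b^{2}c_2^{3}}{1-|c_1|^{2}}\right|\le 1-|\mu|^{2}-\frac{|U'(0)|^{2}}{1+|\mu|},\]
coupling $c_4$, $c_3$, $c_2$, $c_1$.

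The main obstacle will be the reduction of this coupled bound to the clean linear inequality $|c_4|\le 1-|c_1|^{2}-|c_2|^{2}$; a naive triangle-inequality split of the terms $c_4$, $2bc_2c_3$, $b^{2}c_2^{3}$ is too wasteful. Indeed, on the extremal Blaschke product $\omega(z)=z(z-a)/(1-\overline{a}\,z)$, for which $c_1=-a$, $c_2=1-|a|^{2}$, $c_3=\overline{a}(1-|a|^{2})$, $c_4=\overline{a}^{\,2}(1-|a|^{2})$, the three terms cancel exactly while Carleson's bound is attained with equality, so any splitting that loses this cancellation overshoots by a factor of roughly three. To retain the cancellation I plan to treat the degenerate case $|\mu|=1$ (equivalently $|c_2|=1-|c_1|^{2}$) separately via the maximum principle, which forces $\omega$ to be a degree-two Blaschke product and lets $|c_4|$ be computed directly, and in the nondegenerate case to combine the second-level Schur bound with the first-level Carleson bound $|c_3|\le 1-|c_1|^{2}-|c_2|^{2}/(1+|c_1|)$ via a regrouping of the cross terms that respects the extremal cancellation. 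A backup route would be an integral estimate based on $1-|\omega(e^{i\theta})|^{2}\ge 0$ tested against a nonnegative trigonometric polynomial constructed by Fejér--Riesz factorization to isolate the combination $|c_1|^{2}+|c_2|^{2}+\real(\lambda\overline{c_4})$ on the right-hand side.
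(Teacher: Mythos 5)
Your argument for the first inequality is complete and correct: $W=\omega/z$ is a self-map of $\D$ with $W(0)=c_1$, $W'(0)=c_2$, and Schwarz--Pick gives $|c_2|\le 1-|c_1|^2$. But for the second inequality -- the only nontrivial part of the lemma, and the one the theorem actually uses -- you have not given a proof. The Schur reduction and the expansion of $U$ are set up correctly, and the level-two bound $|U''(0)/2|\le 1-|\mu|^2-|U'(0)|^2/(1+|\mu|)$ is a valid Schwarz--Pick consequence, but at that point you stop and declare the passage from this coupled bound to $|c_4|\le 1-|c_1|^2-|c_2|^2$ to be ``the main obstacle,'' offering only strategies (a regrouping ``that respects the extremal cancellation,'' a Fej\'er--Riesz backup) that are neither carried out nor verified. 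That is precisely the step where the content of Carlson's inequality lies, so as it stands the proposal is a plan, not a proof.

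Moreover, the specific completion you sketch cannot work in the form described. Writing $b=\overline{c_1}/(1-|c_1|^2)$, $t=1-|c_1|^2$, $\mu=c_2/t$, and $e_1=c_3+bc_2^2=tU'(0)$, the identity $c_4=(c_4+2bc_2c_3+b^2c_2^3)-2bc_2e_1+b^2c_2^3$ together with your two Schur-level bounds and the triangle inequality gives $|c_4|\le t\bigl[1-m^2-\tfrac{s^2}{1+m}+2pms+p^2m^3\bigr]$ with $p=|c_1|$, $m=|\mu|$, $s=|U'(0)|$; comparing with the target $t\,[1-m^2+p^2m^2]$, one needs $2pms-\tfrac{s^2}{1+m}\le p^2m^2(1-m)$, which is false for admissible data (e.g.\ $p=m=1/2$, $s=pm(1+m)=3/8\le 1-m^2$ gives $3/32$ versus $1/32$). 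So any argument using only the moduli $|c_1|,|c_2|,|e_1|$ after these reductions necessarily overshoots; the phase correlations must be retained, and you have not exhibited a regrouping that does so (your extremal check only confirms sharpness, not that a lossless split exists). Note also that the paper does not prove this lemma at all: it is quoted as a known result of F.\ Carlson (the 1940 paper cited as \cite{carlson}), so the honest alternatives are either to cite that source or to supply a genuinely complete argument for $|c_4|\le 1-|c_1|^2-|c_2|^2$; your current text does neither for the second inequality.
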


\medskip

\section{Main results}

\smallskip

We begin with improvement of the upper bound of the third Hankel determinant for the class $\F$ of Ozaki close-to-convex functions.

\begin{thm}\label{main-thm}
Let $f\in\F$ is of the form $f(z)=z+a_2z^2+a_3z^3+\cdots$. Then
\[ |H_3(1)| \le \frac{1}{8} = 0.125. \]
\end{thm}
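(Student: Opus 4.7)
My plan is to translate the condition $f \in \F$ into a relation involving a Schwarz function $\omega$, express $a_2,\dots,a_5$ as polynomials in the coefficients $c_1,c_2,c_3,c_4$ of $\omega$, and then estimate $H_3(1)$ using Lemmas \ref{lem-prok} and \ref{lem-carl}. Since $\real[1+zf''/f']>-\tfrac12$ and the left-hand side equals $1$ at the origin, the subordination
\[
1 + \frac{zf''(z)}{f'(z)} \prec \frac{1+2z}{1-z}
\]
produces a Schwarz function $\omega(z) = c_1 z + c_2 z^2 + \cdots$ with $zf''(z)/f'(z) = 3\omega(z)/(1-\omega(z))$. Equating coefficients of $z^m$ for $m=1,\dots,4$ in $zf''(z)(1-\omega(z)) = 3\omega(z)f'(z)$ gives each $a_k$ explicitly as a polynomial in $c_1,\dots,c_4$; substituting into
\[
H_3(1) = 2 a_2 a_3 a_4 - a_3^3 - a_4^2 + (a_3 - a_2^2)\, a_5
\]
turns $H_3(1)$ into a polynomial $P(c_1,c_2,c_3,c_4)$ whose structure is then analyzed.

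The crucial observation is that $c_4$ enters $P$ only through the term $(a_3 - a_2^2)\cdot\tfrac{3}{20} c_4$, whose coefficient $\tfrac{3}{80}(2c_2 - c_1^2)$ involves only $c_1$ and $c_2$. The Carleson bound $|c_4| \le 1 - |c_1|^2 - |c_2|^2$ of Lemma \ref{lem-carl} therefore eliminates $c_4$ immediately. In what remains, the part linear in $c_3$ (coming from $2a_2 a_3 a_4$, $-a_4^2$, and $(a_3-a_2^2)a_5$) is regrouped into a single expression of the form $A(c_1,c_2)\bigl(c_3 + \mu\, c_1 c_2 + \nu\, c_1^3\bigr)$ with $\mu = \mu(c_1,c_2)$, $\nu = \nu(c_1,c_2)$ chosen so that Lemma \ref{lem-prok} applies and yields a bound of $|A(c_1,c_2)|$. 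The residual quadratic contribution $-c_3^2/16$ from $-a_4^2$ is controlled by $|c_3|^2 \le (1-|c_1|^2)^2$, and $|c_2|$ by $1-|c_1|^2$. These reductions leave $|H_3(1)|$ as a real function of $x=|c_1| \in [0,1]$, which I expect to be maximized elementarily and shown not to exceed $\tfrac{1}{8}$.

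The main obstacle is that the applicability of Lemma \ref{lem-prok} requires $\tfrac{1}{2}\le|\mu|\le 2$ and $\tfrac{4}{27}(|\mu|+1)^3-(|\mu|+1)\le \nu \le 1$, and since $\mu,\nu$ are specific rational functions of $c_1,c_2$ these constraints may fail in parts of the parameter region. I expect the proof to split into cases: where the hypotheses of Lemma \ref{lem-prok} hold we invoke it directly, while in the complementary region (most likely for small $|c_1|$ or when $(2c_2-c_1^2)$ changes sign) we fall back on the trivial estimate $|c_3| \le 1 - |c_1|^2$ combined with a different regrouping. Identifying an $(\mu,\nu)$ grouping for which $A(c_1,c_2)$ is small enough to close the argument, and then verifying the resulting one-variable inequality in each case, is where the bulk of the technical work lies.
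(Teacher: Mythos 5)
Your skeleton is the same as the paper's (Schwarz function with $[zf'(z)]'(1-\omega)=(1+2\omega)f'$, the same coefficient formulas, eliminating $c_4$ — whose coefficient is indeed $\tfrac{3}{80}(2c_2-c_1^2)$ — by Carleson, and a Prokhorov--Szynal block $c_3+\mu c_1c_2+\nu c_1^3$), but the specific decomposition you propose cannot reach the constant $\tfrac18$. Write $320H_3(1)=4c_1^4c_2+8c_1^3c_3+4c_1c_2c_3-23c_1^2c_2^2-12c_1^2c_4+20c_2^3-20c_3^2+24c_2c_4$. The final estimate must be tight (equal to $40=320\cdot\tfrac18$) at the configuration $c_1=0$, $|c_2|=|c_3|=1$, where $20|c_3|^2+20|c_2|^3$ already gives $40$. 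In your scheme the whole $c_3$-linear part has the forced coefficient $A=8c_1^3+4c_1c_2=4c_1(2c_1^2+c_2)$, so after Lemma \ref{lem-prok} it contributes about $4|c_1|$ near that configuration (no choice of $\mu,\nu$ changes $|A|$; shifting terms only alters the remainder at order $|c_1|^2$), while your separate bounds $20|c_3|^2\le 20(1-|c_1|^2)^2$, $|c_2|\le 1-|c_1|^2$ and the Carleson $c_4$ term decrease from $40$ only at order $|c_1|^2$. Along $|c_2|=1-|c_1|^2$ the resulting upper bound behaves like $40+4|c_1|-c\,|c_1|^2$ with $c\approx 51$, whose maximum exceeds $40$ by roughly $0.08$; you would only obtain $|H_3(1)|\le 0.1253\ldots$, not $\tfrac18$. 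The missing idea is the paper's completion of the square, $-20c_3^2+4c_1c_2c_3=-20\bigl(c_3-\tfrac{1}{10}c_1c_2\bigr)^2+\tfrac15 c_1^2c_2^2$, with $\bigl|c_3-\tfrac{1}{10}c_1c_2\bigr|\le 1$ from the full Prokhorov--Szynal lemma (the region $|\mu|\le\tfrac12$, $|\nu|\le1$ of the original reference, not covered by the hypotheses as quoted in Lemma \ref{lem-prok}); this makes the correction quadratic in $|c_1|$ and keeps the total at $40$.

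Three further points. First, Lemma \ref{lem-prok} requires $\mu,\nu$ to be \emph{real numbers}; rational functions of the complex coefficients $c_1,c_2$ are in general not admissible, so your ``$\mu=\mu(c_1,c_2)$'' device needs to be replaced by fixed real constants (as in the paper, $(\mu,\nu)=(\tfrac12,0)$ for the block $c_3+\tfrac12c_1c_2$). Second, the inequality $|c_3|\le 1-|c_1|^2$ you invoke is true (a classical Schwarz-coefficient bound) but is not part of the quoted Carleson lemma, so it would need separate justification. Third, the problem does not reduce to one variable $x=|c_1|$: the term $(2|c_2|+|c_1|^2)(1-|c_1|^2-|c_2|^2)$ coming from Carleson's bound on $|c_4|$ is not monotone in $|c_2|$, so one must carry out a genuine two-variable maximization over $\{0\le x\le1,\ 0\le y\le 1-x^2\}$, which is exactly what the paper does with its function $h(x,y)$.
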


\begin{proof}
For a function $f\in\F$ there exists a Schwarz function $\omega(z) = c_1z+c_2z^2+\cdots$ such that
\be\label{e4}
1+\frac{zf''(z)}{f'(z)} = -\frac12+\frac32\cdot \frac{1+\omega(z)}{1-\omega(z)},
\ee
i.e.,
\[ [zf'(z)]' \cdot [1-\omega(z)] = [1+2\omega(z)]\cdot f'(z). \]
By equating the coefficients in the abovr expression we receive
\be\label{e6}
\begin{split}
a_2 &= \frac32c_1,\\
a_3 &= \frac12(4c_1^2+c_2),\\
a_4 &= \frac12(2c_3+13c_1c_2+20c_1^3),\\
a_5 &= \frac{3}{40}(2c_4+12c_1c_3+46c_1^2c_2+40c_1^4+5c_2^2).
\end{split}
\ee
Using \eqref{e6} we have
\[
H_3(1) =
\frac{1}{320}  \left[4 c_1 ^4  c_2 +8  c_1 ^3  c_3 + 4  c_1   c_2   c_3 -23 c_1^2 c_2 ^2 - 12c_1^2 c_4 + 20c_2^3 -20c_3^2 +24  c_2   c_4 \right]
\]
and
\[
\begin{split}
320 H_3(1)
&= -20\left[c_3^2-\frac15c_1c_2c_3+\left(\frac{1}{10}\right)^2c_1^2c_2^2\right] + \frac{1}{5}c_1^2c_2^2 - 23c_1^2c_2^2 \\
&+ 8c_1^3\left(c_3+\frac12c_1c_2\right)+20c_2^3+12c^4(2c_2-c_1^2).
\end{split}
\]
From here
\be\label{e7}
\begin{split}
320|H_3(1)| &\le
20 \left|c_3-\frac{1}{10}c_1c_2\right|^2 + \frac{114}{5}|c_1|^2|c_2|^2  +8|c_1|^3 \left| c_3+\frac12c_1c_2 \right|\\
& +20|c_2|^3+ 12\left(2 |c_2| + |c_1|^2\right)|c_4|.
\end{split}
\ee

\medskip

By applying Lemma \ref{lem-prok} (with $(\mu,\nu)=(1/10,0)$ and $(\mu,\nu)=(1/2,0)$) and Lemma \ref{lem-carl}, we receive
\[
\begin{split}
320|H_3(1)| &\le
20  + \frac{114}{5}|c_1|^2|c_2|^2  +8|c_1|^3  +20|c_2|^3\\
&\quad + 12\left(2 |c_2| + |c_1|^2\right)(1-|c_1|^2-|c_2|^2)\\
&= \frac{54}{5}|c_1|^2 |c_2|^2 +8 |c_1|^3 -4 |c_2|^3 +24|c_2|\\
&\quad -24 |c_1|^2|c_2| +12 |c_1|^2 -12 |c_1|^4\\
&= 20  + h(|c_1|,|c_2|),
\end{split}
\]
where
\[ h(x,y) =  \frac{54}{5}x^2 y^2 +8 x^3 -4 y^3 +24 y -24 x^2 y +12 x^2 -12 x^4,\]
$0\le x\le1$ and $0\le y \le1-x^2$.

\medskip

We continue with finding the maximum of the function $h$ on the region $\Omega=\{(x,y): 0\le x\le1, 0\le y\le1-x^2\}$.

\medskip

The function $h$ has no critical points in the interior of $\Omega$ because $h'_y(x,y) = x^2\left(\frac{108}{5} y-24 \right)-12 y^2+24 = 0$ has only one positive solution for $x$, that is $\sqrt{\frac{5(2-y^2)}{10-9y}}$, an increasing function of $y$ over $(0,\infty)$ with $x(0)=1$.

\medskip

Therefore, we continue studying $h$ on the edges of $\Omega$.

\medskip

For $x=0$, $h(0,y)=24y-4y^3 \le h(0,1)=20$.

For $x=1$, we have $y=0$, and $h(1,0)=8$.

For $y=0$, $h(x,0)=x^2(-12 x^2+8 x+12) $ which can be easily shown to increasing function on the segment $[0,1]$, with maximal value $h(1,0)=8$.

Finally, $g(x):=h(x,1-x^2)=\frac{74}{5} x^6-\frac{108}{5}x^4+8 x^3-\frac{66 }{5}x^2+20$ is a decreasing function on the interval $[0,1]$, since $g'(x)=-\frac{12}{5}x(1-x)(11 + x + 37 x^2 + 37 x^3)$ and $g'(x)=0$ has no solutions on $(0,1)$. Thus, $h(x,1-x)\le g(0)=20$.

\medskip

The above analysis bring the final conclusion that $h$ has maximal value 20 on $\Omega$, i.e.,
\[ |H_3(1)| \le \frac{1}{320}(20+ 20) = \frac{1}{8}.\]
\end{proof}

\medskip

The previous result, although significantly improves the one from \cite{ind-1}, still is not sharp, as the following one dealing with the class $\G$.

\medskip

\begin{thm}\label{th2}
Let $f\in\G$ and is of the form $f(z)=z+a_2z^2+a_3z^3+\cdots$. Then
\[ |H_3(1)| \le \frac{17}{1080} = 0.01574\ldots. \]
\end{thm}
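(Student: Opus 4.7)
The plan is to mirror the strategy used in Theorem~\ref{main-thm}. Since $f\in\G$, there exists a Schwarz function $\omega(z)=c_1z+c_2z^2+\cdots$ with
\[
1+\frac{zf''(z)}{f'(z)}=\frac{1-2\omega(z)}{1-\omega(z)},
\]
equivalently $[zf'(z)]'(1-\omega(z))=(1-2\omega(z))f'(z)$. Equating coefficients of $z^n$ for $n=1,2,3,4$ should yield
\[
a_2=-\tfrac12 c_1,\quad a_3=-\tfrac16 c_2,\quad a_4=-\tfrac{1}{24}(2c_3+c_1c_2),
\]
\[
a_5=-\tfrac{1}{120}(6c_4+4c_1c_3+2c_1^2c_2+3c_2^2).
\]
Substituting into $H_3(1)=a_3(a_2a_4-a_3^2)-a_4(a_4-a_2a_3)+a_5(a_3-a_2^2)$ and clearing denominators gives
\[
8640\,H_3(1)=-60c_3^2-132c_1c_2c_3+72c_1^3c_3+3c_1^2c_2^2+36c_1^4c_2+76c_2^3+72c_2c_4+108c_1^2c_4.
\]

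The crucial algebraic step is to complete the square in $c_3$ so that the Prokhorov--Szynal expression $c_3+\mu c_1c_2+\nu c_1^3$ appears:
\[
-60c_3^2-132c_1c_2c_3+72c_1^3c_3=-60\left(c_3+\tfrac{11}{10}c_1c_2-\tfrac{3}{5}c_1^3\right)^{2}+\tfrac{363}{5}c_1^2c_2^2-\tfrac{396}{5}c_1^4c_2+\tfrac{108}{5}c_1^6.
\]
With $\mu=11/10\in[1/2,2]$ and $\nu=-3/5$ one checks
\[
\tfrac{4}{27}(|\mu|+1)^3-(|\mu|+1)=\tfrac{4}{27}\bigl(\tfrac{21}{10}\bigr)^{3}-\tfrac{21}{10}=-\tfrac{91}{125}<-\tfrac{3}{5}\le 1,
\]
so Lemma~\ref{lem-prok} bounds the squared bracket by $1$ in modulus. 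Taking absolute values and invoking Carleson's Lemma~\ref{lem-carl} in the form $|c_4|\le 1-|c_1|^2-|c_2|^2$ to dominate $72|c_2||c_4|+108|c_1|^2|c_4|$ reduces everything to showing $G(x,y)\le 76$ on
\[
\Omega=\{(x,y):0\le x\le 1,\ 0\le y\le 1-x^2\},\quad x=|c_1|,\ y=|c_2|,
\]
where
\[
G(x,y)=\tfrac{108}{5}x^6+\tfrac{216}{5}x^4y-\tfrac{162}{5}x^2y^2+4y^3+72y+108x^2-72x^2y-108x^4.
\]

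To verify this last inequality I would first show $\partial G/\partial y>0$ throughout $\Omega$: $\partial_y G$ is a quadratic in $y$ with positive leading coefficient $12$, and its minimum value over the admissible interval $[0,1-x^2]$ stays strictly positive whether the vertex $y=(27/10)x^2$ lies inside that interval or above it. Consequently the maximum of $G$ is attained on the arc $y=1-x^2$, and
\[
G(x,1-x^2)=-58x^6+84x^4-\tfrac{402}{5}x^2+76
\]
has derivative $-(12x/5)(145x^4-140x^2+67)$; since the discriminant $140^2-4\cdot 145\cdot 67=-19260$ is negative, the quadratic factor is strictly positive, so $G(\cdot,1-\cdot^{\,2})$ is decreasing on $[0,1]$ with maximum $76$ attained uniquely at $x=0$. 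Combining the pieces yields $|8640\,H_3(1)|\le 60+76=136$, i.e., $|H_3(1)|\le 17/1080$. The main obstacle I foresee is the bookkeeping when deriving the eight-term form of $8640\,H_3(1)$ — a small arithmetic slip spoils the subsequent square completion — together with the careful two-variable optimisation, in particular certifying that the monotonicity $\partial_y G>0$ really does hold across the whole region and that no interior critical point of $G$ beats the boundary value $76$.
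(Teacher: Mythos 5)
Your proposal is correct and stays within the same overall framework as the paper: identical Schwarz-function parametrization and coefficient formulas, the same expansion of $8640\,H_3(1)$, the same two lemmas (Prokhorov--Szynal and Carleson), and the same final split $60+76=136$, hence $17/1080$. The differences are in the decomposition and the optimisation, and they check out. The paper completes the square only in $c_3+\frac{11}{10}c_1c_2$ (i.e.\ $(\mu,\nu)=(11/10,0)$), keeps the term $72c_1^3\left(c_3+\frac12 c_1c_2\right)$ separate, and thus invokes Lemma \ref{lem-prok} twice, with $(\mu,\nu)=(11/10,0)$ and $(1/2,0)$; you instead absorb the whole $72c_1^3c_3$ contribution into a single square with $(\mu,\nu)=(11/10,-3/5)$, whose admissibility you correctly verify since $-91/125<-3/5\le 1$, and your identity and the resulting function $G$ are arithmetically right (I checked the coefficients $\frac{378}{5}-108=-\frac{162}{5}$, $76-72=4$, etc.). You also keep the exact factor $72|c_2|+108|c_1|^2$ on $|c_4|$, where the paper first weakens it to $36(2+|c_1|^2)$ via $|c_2|\le 1-|c_1|^2$; your bound is therefore pointwise slightly tighter, though both auxiliary functions peak at $76$ at $(x,y)=(0,1)$. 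Finally, your optimisation via $\partial_y G>0$ on $\Omega$ (which does hold: the vertex value $-\frac{1107}{25}x^4-72x^2+72$ is positive for $x^2\le 10/37$, and $120x^4-\frac{804}{5}x^2+84$ has negative discriminant) pushes the maximum to the arc $y=1-x^2$, where $G(x,1-x^2)=-58x^6+84x^4-\frac{402}{5}x^2+76$ is decreasing; this is cleaner than the paper's route, which hunts interior critical points (including a numerically located one at $x_2=0.2311\ldots$) and then checks all four boundary pieces. The only caveat is that the positivity of $\partial_y G$ is asserted rather than computed in your sketch, so you should include the two short case computations above when writing it up.
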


\medskip

\begin{proof}
Similarly as in the proof of the previous theorem, for each function $f$ from $\G$, there exists a function  $\omega(z)= c_1z+c_2z^2+\cdots$, analytic in $\D$, such that $|\omega(z)|<1$ for all $z$ in $\D$, and
\be\label{eeq}
1+\frac{zf''(z)}{f'(z)} = \frac32-\frac12\cdot \frac{1+\omega(z)}{1-\omega(z)},
\ee
i.e.,
\[   [zf'(z)]' \cdot [1-\omega(z)] = [1-2\omega(z)]\cdot f'(z). \]
From here, by equating the coefficients we receive
\[
\begin{split}
a_2 &= -\frac{1}{2}c_1,\\
a_3 &= -\frac16c_2,\\
a_4 &= -\frac{1}{24}(2c_3+c_1c_2),\\
a_5 &= -\frac{1}{120}(6c_4+4c_1c_3+3c_2^2+2c_1^2c_2).
\end{split}
\]
From here, after some calculations we receive
\[
\begin{split}
H_3(1) &=
 \frac{1}{8640}  \left[-60c_3^2 - 132 c_1c_2c_3 + 72c_1^3c_3 + 36c_4(2c_2+3c_1^2) \right.\\
&\quad \left. + 36c_1^4c_2 + 76c_2^3 + 3c_1^2c_2^2 \right],
\end{split}
\]
i.e.,
\[
\begin{split}
8640 H_3(1) &=
-60\left[ c_3^2 +\frac{11}{5} c_1c_2c_3 + \left(\frac{11}{10}\right)^2c_1^2c_2^2\right] + \left[60\left(\frac{11}{10}\right)^2+3\right]c_1^2c_2^2\\
&+ 72c_1^3\left(c_3+\frac12c_1c_2\right)+76c_2^2+36(2c_2+3c_1^2)c_4
\end{split}
\]
and further
\[
\begin{split}
8640 |H_3(1)| &=
60\left| c_3 + \frac{11}{10} c_1c_2\right|^2 + \frac{756}{10}|c_1|^2|c_2|^2+ 72|c_1|^3\left|c_3+\frac12c_1c_2\right|\\
&+76|c_2|^2+36(2|c_2|+3|c_1|^2)|c_4|.
\end{split}
\]
In a similar way as in the proof of the previous theorem, from  Lemma \ref{lem-prok} (with $(\mu,\nu)=(11/10,0)$ and $(\mu,\nu)=(1/2,0)$) and Lemma \ref{lem-carl}, we receive
\[
\begin{split}
8640 |H_3(1)| &=
60 + \frac{756}{10}|c_1|^2|c_2|^2+ 72|c_1|^3+76|c_2|^2\\
&\quad +36(2+|c_1|^2)(1-|c_1|^2-|c_2|^2)\\
&= 60 + h(|c_1|,|c_2|),
\end{split}
\]
where
\[h(x,y) = \frac{756}{10} x^2 y^2 + 72 x^3 + 76 y^3 +
36 \left(2+x^2\right) \left(1-x^2-y^2\right), \]
$(x,y)\in\{(x,y): 0\le x\le 1, 0\le y\le1-x^2\}=:\Omega$.
\medskip

Since $h'_x(x,y)=\frac{36}{5} x \left(10 x (3-2 x)+11 y^2-10\right)$, then $h'_x(x,y)=0$ has positive solution $y_*(x) = \sqrt{\frac{10}{11}(2 x^2-3 x+1)}$ for $x\in(0,1/2)$. Further, $h'_y(x,y)=\frac{12}{5} y \left(33 x^2+95 y-60\right)$ and
\[
\begin{split}
&g(x)=h'_y(x,y_*(x)) \\
&= \frac{24}{\sqrt{55}} \sqrt{(x-1)\left(x-\frac12\right)} \left[33 x^2+95 \sqrt{\frac{20}{11}} \sqrt{(x-1)\left(x-\frac12\right)}-60\right]
\end{split}
\]
on the interval $(0,1)$, has solutions $x_1=0.5$ and $x_2=0.2311\ldots$, with $y_1=g(x_1)=0.75$ and
$y_2=g(x_2)= 0.6130\ldots$, respectively. Both, $(x_1,y_1)$ and $(x_2,y_2)$ are in $\Omega$, so are critical points of $h$ in the interior of $\Omega$, such that $h(x_1,y_1)=69.75$ and  $h(x_2,y_2)=62.10899\ldots$.

\medskip

Further, on the edges of $\Omega$ we have the following.

\medskip

For $x=0$, $h(0,y)=76 y^3-72y^2+72\le h(0,1)=76$.

For $x=1$, $h(0,1)=72$.

For $y=0$, $h(x,0)=-36 x^4+72 x^3-36 x^2+72 \le h(0,0)=h(1,0)=72$.

For $y=1-x^2$, we have $h(x,1-x^2)=-\frac{182 x^6}{5}+\frac{204 x^4}{5}+72 x^3-\frac{402 x^2}{5}+76 \le 76$ obtained for $x=0$.

\medskip

All the analysis from above leads to the conclusion that $h$ has maximal value 76 on $\Omega$ obtained for $x=0$ and $y=1$, i.e.,
\[ |H_3(1)| \le \frac{1}{8640}(60+76) = \frac{17}{1080} = 0.01574\ldots. \]
\end{proof}

\medskip

The estimates of the third Hankel determinant given in Theorem \ref{main-thm} and Theorem \ref{th2} are probably not sharp. Here is a conjecture of the sharp values.

\begin{conj} Let $f\in\A$ and is of the form $f(z)=z+a_2z^2+a_3z^3+\cdots$.
\begin{itemize}
\item[$(i)$] If $f\in\F$, then $|H_{3}(1)|\leq \frac{1}{16} = 0.0625$;
\item[$(ii)$] If $f\in\G$, then $|H_{3}(1)|\leq \frac{19}{2160}=0.00879\ldots$.
\end{itemize}
Both estimates are sharp with extremal functions $\frac{1+2z^2}{1-z^2}$ and $\frac{1}{2} \left(z \sqrt{1-z^2}+\arcsin{z}\right)$, respectively,  obtained for $\omega(z)=z^2$ in \eqref{e4} and \eqref{eeq}.
\end{conj}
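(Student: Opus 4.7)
The looseness in the proofs of Theorems~\ref{main-thm} and~\ref{th2} enters at two places: the Prokhorov--Szynal step, which estimates the modulus of a Schwarz combination like $c_3+\mu c_1c_2$ by $1$, and the Carleson step, which estimates $|c_4|$ by $1-|c_1|^2-|c_2|^2$. At the conjectured extremal Schwarz function $\omega(z)=z^2$ one has $c_1=c_3=c_4=0$ and $c_2=1$, so the quantities being bounded are in fact zero while the triangle inequality charges their maxima. To reach the conjectured values one must retain the algebraic cancellations among the monomials of $H_3(1)$ instead of dissolving them into absolute values.

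My plan is to replace the Prokhorov--Szynal and Carleson steps by an exact Schur-type parametrisation of the Schwarz coefficients. Write
\[
c_2=(1-|c_1|^2)\xi_2,\qquad c_3=(1-|c_1|^2)\bigl[(1-|\xi_2|^2)\xi_3-\overline{c_1}\,\xi_2^{\,2}\bigr],
\]
and similarly express $c_4$ as an expression that is affine in a further disk parameter $\xi_4\in\overline{\D}$, with $\xi_2,\xi_3,\xi_4\in\overline{\D}$ otherwise free. Substituting these identities into the polynomial formulas for $H_3(1)$ already derived in the proofs of Theorems~\ref{main-thm} and~\ref{th2} gives an exact expression $P(c_1,\xi_2,\xi_3,\xi_4)$ which is linear in $\xi_4$ and of moderate polynomial degree in the remaining variables.

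Optimisation then proceeds in stages. First, apply the triangle inequality only with respect to $\xi_4$, taking $|\xi_4|=1$; this is sharp and yields a bound depending on $(c_1,\xi_2,\xi_3)$ alone. Next, rotate $c_1$ to be non-negative real and optimise over the phases of $\xi_2,\xi_3$ so that the monomials align constructively, which should reduce the problem to a real three-variable optimisation on the unit cube. The structure of the conjectured extremum ($c_1=0$, $\xi_2=1$, $\xi_3=\xi_4=0$) predicts that the maximum is attained on the face $|c_1|=0$, $|\xi_2|=1$, and elementary calculus on the remaining boundary pieces should then close the argument, yielding $\tfrac{1}{16}$ for $\F$ and $\tfrac{19}{2160}$ for $\G$.

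The main obstacle is the phase-alignment step: after substitution, $|P|^2$ is a polynomial whose monomials carry different combinations of the arguments of $c_1,\xi_2,\xi_3$, and it is not at all clear a priori that the global maximum over phases is realised by the single real configuration corresponding to $\omega(z)=z^2$. A secondary difficulty is the sheer size of the resulting polynomial, which likely forces either clever regrouping into sums of squares plus boundary terms, or computer-assisted verification that $(0,1)$ is indeed the global maximum of the reduced two-variable function. It is precisely this phase interaction that the triangle inequality in the arguments above destroys, and restoring it is the gap that separates Theorems~\ref{main-thm} and~\ref{th2} from the conjectured sharp bounds.
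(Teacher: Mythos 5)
This statement is a conjecture in the paper: the authors give no proof, only the observation that $\omega(z)=z^2$ in \eqref{e4} and \eqref{eeq} produces coefficient sequences whose $H_3(1)$ equals $\tfrac{1}{16}$ and $\tfrac{19}{2160}$ respectively, and the guess that these are extremal. Your proposal does not close that gap: it is a research plan, not a proof. Your diagnosis of where Theorems \ref{main-thm} and \ref{th2} lose sharpness is essentially right --- at $c_1=c_3=c_4=0$, $c_2=1$ the term $20\left|c_3-\tfrac{1}{10}c_1c_2\right|^2$ (resp.\ $60\left|c_3+\tfrac{11}{10}c_1c_2\right|^2$) vanishes while the Prokhorov--Szynal step charges its full value $20$ (resp.\ $60$), and this alone accounts exactly for the difference between $\tfrac18$ and $\tfrac1{16}$, and between $\tfrac{17}{1080}$ and $\tfrac{19}{2160}$; note, however, that the Carleson step is actually tight at this point ($|c_4|=0=1-|c_1|^2-|c_2|^2$), so it is not a second source of slack there. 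The genuine gap is the one you yourself flag: after the Schur-type substitution for $c_2,c_3,c_4$, the maximization of the exact expression over the moduli and, crucially, the phases of $c_1,\xi_2,\xi_3$ is not carried out, and no argument is offered that the global maximum sits at $c_1=0$, $|\xi_2|=1$, $\xi_3=\xi_4=0$. Taking $|\xi_4|=1$ is sharp only in $\xi_4$; everything after that ("should reduce", "should then close the argument", possible computer assistance) is unsubstantiated, so neither bound $\tfrac{1}{16}$ nor $\tfrac{19}{2160}$ is established. In short: correct identification of the obstruction, plausible strategy, but the conjecture remains unproved by your argument, exactly as it remains unproved in the paper.

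A small side remark: the claimed extremal function for $\F$ as printed in the conjecture, $\frac{1+2z^2}{1-z^2}$, is not a normalized function of $\A$; it is the value of $1+\frac{zf''(z)}{f'(z)}$ when $\omega(z)=z^2$, and the corresponding extremal function is $f(z)=z/\sqrt{1-z^2}$, whose coefficients $a_2=a_4=0$, $a_3=\tfrac12$, $a_5=\tfrac38$ indeed give $H_3(1)=\tfrac{1}{16}$; for $\G$ the stated function $\tfrac12\left(z\sqrt{1-z^2}+\arcsin z\right)$ gives $a_3=-\tfrac16$, $a_5=-\tfrac1{40}$ and $H_3(1)=\tfrac{19}{2160}$. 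Any eventual proof along your lines should at least include this verification of the target values, which your proposal takes for granted.
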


\medskip

\end{document}